\newtheorem{theorem}{Theorem}
\newtheorem{lemma}{Lemma}
\journal{Theoretical Computer Science}
\begin{document}

\begin{frontmatter}

\title{The Iris function and the matrix permanent}

\author{Ali \"{O}nder Bozdo\u{g}an}
\address{Ankara \"{U}niversitesi Elektrik-Elektronik M\"{u}hendisli\u{g}i B\"{o}l\"{u}m\"{u}, 06830, Ankara, T\"{u}rkiye}




\begin{abstract}
This paper defines the Iris function and provides two formulations of the matrix permanent. The first formulation, valid for arbitrary complex matrices, expresses the permanent of a complex matrix as a contour integral of a second order Iris function over the unit circle around zero. The second formulation is defined for the restricted set of matrices with complex or ``Gaussian" integer elements. Using the second formulation, the paper shows that the computation of the permanent of an arbitrary $n\times n$ 0-1 matrix is bounded by  $o\left( {{n^{27}}{{\left( {\log \left( {{n^3}} \right)} \right)}^6}{{\left( {{{\log }_2}\left( n \right)} \right)}^2}} \right)$  binary operations.  
\end{abstract}

\begin{keyword}
Iris function, permanent of complex matrices, permanent of complex integer matrices, $\#P$-complete.
\end{keyword}

\end{frontmatter}

\linenumbers

\section{Introduction}

 Let $A$ be an $n\times n$ matrix with elements ${A_{.,.}} \in \mathbb{C},$ and,  for $i \le k \in \mathbb{Z}$, let  $Z_{i,k}$ denote the set of consecutive integers from $i$ to $k$, i.e. $Z_{i,k}=\left\{ {i, \ldots ,k} \right\}$. The permanent of $A$ is defined as
\begin{equation}\label{eq:i1}
{\rm{per}}{\mkern 1mu} \left( A \right) = \sum\limits_{\sigma  \in {S_n}} {\prod\limits_{i = 1}^n {{A_{i,\sigma_i}}} } ,
\end{equation}
where $S_n$ is the set of column vectors of dimension $n$, such that, 
\begin{equation}\label{eq:sn}
{S_n} = \left\{ \begin{array}{l}
\sigma :\sigma  = {\left[ {\begin{array}{*{20}{c}}
{{\sigma _1}}& \ldots &{{\sigma _n}}
\end{array}} \right]^T,}\,\,\sigma_i\in {Z_{1,n}}\,\, \forall \,i \in {Z_{1,n}},\\
\,\,\forall \,i \in {Z_{1,n}}{\rm{ }}\,{\rm{and }}\,\,\forall j \in {Z_{1,n}}\,\,\,i \ne j \to {\sigma _i} \ne {\sigma _j}\,{\rm{ }}
\end{array} \right\}.
\end{equation}
 The elements of $S_n$ are vectors of the permutations of the first positive $n$ integers.
 
The generating function of the matrix permanent is given and a contour integral formulation of the permanent is derived in \cite{roySaddle}. Let $z_k\in \mathbb{C}$ for $k \in Z_{1,n}$, the generating function $\Psi^A :\mathbb{C}^n \to \mathbb{C}$ of the permanent of matrix $A$ is given by 
   \begin{equation}\label{eq:permanentGeneratingFun}
\Psi^A \left( {{z_1}, \ldots ,{z_n}} \right) = \prod\limits_{i = 1}^n {\sum\limits_{k = 1}^n {{A_{i,k}}{z_k}} }. 
   \end{equation}
Observe that $\Psi^A\left(.\right)$ is entire. From \eqref{eq:permanentGeneratingFun}, the permanent of matrix $A$ is calculated as 
   \begin{equation}\label{eq:permanentGeneratingFunDiff}
{\rm{per}}\left( A \right) = {\left. {\frac{{{d^n}}}{{d{z_1} \ldots d{z_n}}}\Psi^A \left( {{z_1}, \ldots ,{z_n}} \right)} \right|_{{z_1} =  \ldots  = {z_n} = 0}}.
   \end{equation}
Since $\Psi^A$ is entire,  using the Cauchy's integral formula for several variables for $\Psi^A$ around circular contours with positive radii gives (see \cite{bersComplex}, Theorem 2)  
   \begin{equation}\label{eq:permanentGeneratingFunCauchy}
\Psi^A \left( {{\zeta _1}, \ldots ,{\zeta _n}} \right) = \frac{1}{{{{\left( {2\pi j} \right)}^n}}}\oint\limits_{\left| {{z_1} - {\zeta _1}} \right| = {r_1}}  \cdots  \oint\limits_{\left| {{z_n} - {\zeta _n}} \right| = {r_n}} {\frac{{\Psi \left( {{z_1}, \ldots ,{z_n}} \right)}}{{\left( {{z_1} - {\zeta _1}} \right) \cdots \left( {{z_n} - {\zeta _n}} \right)}}d} {z_1} \cdots d{z_n},
   \end{equation}    
where $j^2=-1$. From \eqref{eq:permanentGeneratingFunDiff}, differentiating inside the integral and setting $\zeta_.=0$ gives
   \begin{equation}\label{eq:permanentGeneratingFunCauchyDiff}
{\rm{per}}\left( A \right) = \frac{1}{{{{\left( {2\pi j} \right)}^n}}}\oint\limits_{\left| {{z_1}} \right| = {r_1}}  \cdots  \oint\limits_{\left| {{z_n}} \right| = {r_n}} {\frac{{\Psi^A \left( {{z_1}, \ldots ,{z_n}} \right)}}{{{z_1}^2 \cdots {z_n}^2}}d} {z_1} \cdots d{z_n}.
   \end{equation} 
Set $z_. = \exp \left( {j{\theta _.}}\right)$ for  $0\le \theta_.<2\pi $, then \eqref{eq:permanentGeneratingFunCauchyDiff} can be rewritten as 
   \begin{equation}\label{eq:permanentGeneratingFunCauchyDiffExp}
\begin{array}{*{20}{l}}
{{\rm{per}}\left( A \right) = \frac{1}{{{{\left( {2\pi } \right)}^n}}}\int\limits_0^{2\pi }  \cdots  \int\limits_0^{2\pi } {\frac{{\Psi \left( {\exp \left( {j{\theta _1}} \right), \ldots ,\exp \left( {j{\theta _n}} \right)} \right)}}{{\exp \left( {j{\theta _1}} \right) \cdots \exp \left( {j{\theta _n}} \right)}}d{\theta _1} \cdots d{\theta _n}} }\\
{ = \frac{1}{{{{\left( {2\pi } \right)}^n}}}\int\limits_0^{2\pi }  \cdots  \int\limits_0^{2\pi } {\frac{{\prod\limits_{i = 1}^n {\sum\limits_{k = 1}^n {{A_{i,k}}\exp \left( {j{\theta _k}} \right)} } }}{{\prod\limits_{i = 1}^n {\exp \left( {j{\theta _i}} \right)} }}d{\theta _1} \cdots d{\theta _n}} }.
\end{array}
   \end{equation}    
Observe that \eqref{eq:permanentGeneratingFunCauchyDiffExp} is  an alternative, but equivalent, way of expressing \eqref{eq:i1}. This is because, the only terms that are free of a complex exponential in the integrand of \eqref{eq:permanentGeneratingFunCauchyDiffExp} are those terms in \eqref{eq:i1}. Any other, or alias, term in the integrand must be accompanied by a complex exponential, which, then, must be eliminated by the integral itself. Naively, \eqref{eq:permanentGeneratingFunCauchyDiffExp} can be evaluated as 
   \begin{equation}\label{eq:permanentGeneratingFunCauchyDiffExpSumForm}
{\rm{per}}\left( A \right) = \frac{1}{{{n^n}}}\sum\limits_{{i_1} = 0}^{n - 1} { \cdots \sum\limits_{{i_n} = 0}^{n - 1} {\left( {\frac{{\prod\limits_{k = 1}^n {\sum\limits_{l = 1}^n {{A_{k,l}}\exp \left( {j\frac{{2\pi {i_l}}}{n}} \right)} } }}{{\prod\limits_{l = 1}^n {\exp \left( {j\frac{{2\pi {i_l}}}{n}} \right)} }}} \right)} } ,
\end{equation}  
where the nested integral is evaluated over the discrete grid in which each dimension is sampled with $n$ samples, thus require an exponential computational complexity. The best known exact method to calculate the permanent \cite{ryserBook} (Chapter 2, Theorem 4.1) takes ${2^{n + O\left( {\log \left( n \right)} \right)}}$ steps, which, still is of an exponential complexity, but with a lower factor for each dimension. This, in principle, indicates that the exponential complexity of calculating the permanent should be related to, if not a direct result of, what is generally known as the curse of dimensionality of integration. Thus, a polynomial time algorithm that calculates the permanent of an arbitrary matrix, if written in an integral form, should be composed of a fixed, or slowly increasing number of nested integrals with respect to the dimension of the matrix.

Define the Iris function of order $t\ge 1$ for an $n\times n $ complex matrix $A$, $\iota _t^A: \mathbb{C}^t \times \mathbb{Z^+}^{t\times n} \to \mathbb{C}$
   \begin{equation}\label{eq:Iris_t}
   {\iota _t^A}\left( {{z_1}, \ldots ,{z_t}},\alpha \right) = \prod\limits_{i = 1}^n {\sum\limits_{k = 1}^n {\left( {{A_{i,k}}\prod\limits_{l = 1}^t {z_l^{\alpha _k^l}} } \right)} } ,
\end{equation}  
where the row vector ${\alpha ^l}$ denotes the $l^{th}$ row of the matrix $\alpha \in \mathbb{Z^+}^{t\times n}$, i.e. ${\alpha ^l} = \left[ {\begin{array}{*{20}{c}}
{\alpha _1^l}& \ldots &{\alpha _n^l}
\end{array}} \right]$ for $l \in Z_{1,t}$. The matrix $\alpha$ should only be selected such that the permanent of matrix $A$ is equal to the mixed derivative
   \begin{equation}\label{eq:Iris_Derivative}
{\rm{per}}\left( A \right) = {\left. {\frac{{\frac{{{d^S}}}{{dz_1^{\alpha _T^1} \ldots dz_t^{\alpha _T^t}}}\iota _t^A\left( {{z_1}, \ldots ,{z_t},\alpha} \right)}}{{\prod\limits_{i = 1}^t {\alpha _T^i} !}}} \right|_{{z_1} =  \ldots  = {z_t} = 0}}
\end{equation}  
where
   \begin{equation}\label{eq:alphaT}
\alpha _T^. = \sum\limits_{i = 1}^n {\alpha _i^.}
\end{equation} 
and
   \begin{equation}\label{eq:S}
S = \sum\limits_{i = 1}^t {\alpha _T^i}. 
\end{equation} 
 Since $\iota$ is entire, using the Cauchy's integral formula for positive radii $r_.$ around zero gives
  \begin{equation}\label{eq:Iris_DerivativeCauchy}
{\rm{per}}\left( A \right) = \frac{1}{{{{\left( {2\pi j} \right)}^t}}}\oint\limits_{\left| {{z_1}} \right| = {r_1}}  \ldots  \oint\limits_{\left| {{z_t}} \right| = {r_t}} {\frac{{\iota _t^A \left( {{z_1}, \ldots ,{z_t},\alpha} \right)}}{{{z_1}^{\alpha _T^1 + 1} \ldots {z_t}^{\alpha _T^t + 1}}}d{z_1} \cdots d{z_t}}. 
\end{equation}  
Equivalently, set $z_. = \exp \left( {j{\theta _.}}\right)$ for  $0\le \theta_.<2\pi $ to obtain

   \begin{equation}\label{eq:Iris_Integral}
{\rm{per}}\left( A \right) = \frac{1}{{{{\left( {2\pi } \right)}^t}}}\int\limits_0^{2\pi } { \cdots \int\limits_0^{2\pi } {\frac{{\prod\limits_{i = 1}^n {\left( {\sum\limits_{k = 1}^n {{A_{i,k}}\prod\limits_{l = 1}^t {\exp \left( {j\alpha _k^l{\theta _l}} \right)} } } \right)} }}{{\prod\limits_{l = 1}^t {\exp \left( {j\alpha _T^l{\theta _l}} \right)} }}} } d{\theta _1} \cdots d{\theta _t}.
\end{equation}

The generating function of the matrix permanent is an Iris function of order $n$ which uses $\alpha=I_n$, where $I_n$ denotes the identity matrix of size $n$. Clearly, the matrix $\alpha$ is of full rank, and in fact, Section 2 will show that any full rank matrix $\alpha \in \mathbb{Z^+}^{n\times n} $ satisfies \eqref{eq:Iris_Derivative}. However, a full rank matrix $\alpha$ also leads to the formulation in \eqref{eq:Iris_Integral} which involves $n$ nested integrals, and, up to the knowledge of the author, there exists no known efficient algorithm to evaluate \eqref{eq:Iris_Integral} for an arbitrary matrix $A$. 

The main contribution of this paper is to provide a new formulation for the permanent of an arbitrary $n\times n$ complex matrix via a second order Iris function that is defined in Theorem 1 in Section 2. The fixed reduced order formulation in Theorem 1 avoids the curse of dimensionality. However, the reduction in the rank of $\alpha$ is often accompanied with an exponential increase in its elements, as it is not easy to find a matrix $\alpha$ that satisfies \eqref{eq:Iris_Derivative} and whose elements increase at a polynomial rate with the dimension of the matrix, thus leading to a polynomial time algorithm. The elements of each row in the matrix $\alpha$ for the Iris function in Theorem 1 are selected to be consecutive prime numbers, thus they do not increase at an exponential rate as the prime number theorem demonstrates. Based on Theorem 1, Theorem 2 in Section 3 will give an alternative formulation of the permanent for matrices with complex or Gaussian integer elements. The alternative formulation in Theorem 2 is intended for digital machines to carry arithmetic operations. The method avoids the integral \eqref{eq:Iris_Integral} and requires basic arithmetic operations such as summation, multiplication and bit shift. Section 3 shows that on 0-1 or binary matrices, a loose upper bound on the complexity of exact evaluation of the permanent is $o\left( {{n^{27}}{{\left( {\log \left( {{n^3}} \right)} \right)}^6}{{\left( {{{\log }_2}\left( n \right)} \right)}^2}} \right)$ binary operations. But, since, \cite{valiant1979} defines the complexity class $\#P$, and shows that the permanent function of 0-1 matrix is $\#P$ complete, our result in Section 3 shows that the problems in the class $\#P$ can be solved in polynomial time.  Our final comments will be presented in the Conclusions section. 

\section{ Theorem 1 and its proof}
\begin{theorem}
Let $P$ be the set of prime numbers, and let $P_i$ denote the $i^{th}$ prime number. Define the $2\times n$ matrix $\alpha$
 \begin{equation}\label{eq:theorem1Alpha}
{\alpha _{i,j}} = \left\{ \begin{array}{l}
{P_{j + p}}{\mkern 1mu} {\mkern 1mu} {\mkern 1mu} {\mkern 1mu} {\mkern 1mu} {\mkern 1mu} {\mkern 1mu} {\mkern 1mu} {\mkern 1mu} {\mkern 1mu} {\mkern 1mu} {\mkern 1mu} {\mkern 1mu} {\mkern 1mu} {\mkern 1mu} {\mkern 1mu} {\mkern 1mu} {\mkern 1mu} {\mkern 1mu} {\mkern 1mu} {\mkern 1mu} {\mkern 1mu} {\mkern 1mu} {\mkern 1mu} {\mkern 1mu} {\mkern 1mu} {\mkern 1mu} {\mkern 1mu} {\mkern 1mu} {\mkern 1mu} {\mkern 1mu} {\mkern 1mu} {\mkern 1mu} {\mkern 1mu} \,\,\,\,\, if{\mkern 1mu} {\mkern 1mu} i = 1,{\mkern 1mu} {\mkern 1mu} j \in {Z_{0,n - 1}}\\
{\left( {{P_{j + p}}} \right)^2}{\mkern 1mu} {\mkern 1mu} {\mkern 1mu} {\mkern 1mu} {\mkern 1mu} {\mkern 1mu} {\mkern 1mu} {\mkern 1mu} {\mkern 1mu} {\mkern 1mu} {\mkern 1mu} {\mkern 1mu} {\mkern 1mu} {\mkern 1mu} {\mkern 1mu} {\mkern 1mu} {\mkern 1mu} {\mkern 1mu} {\mkern 1mu} {\mkern 1mu} {\mkern 1mu} {\mkern 1mu} {\mkern 1mu} {\mkern 1mu} {\mkern 1mu} if{\mkern 1mu} {\mkern 1mu} i = 2,{\mkern 1mu} {\mkern 1mu} j \in {Z_{0,n - 1}}
\end{array} \right.,
\end{equation}  
where $p\in \mathbb{Z}^+$ is selected such that
 \begin{equation}\label{eq:conditionP}
P_p> {n^2}\left( {1 + \frac{{{\Delta _{\max }}}}{{{P_p}}}} \right)\frac{{{\Delta _{\max }}}}{{{\Delta _{\min }}}},
\end{equation} 
where
 \begin{equation}\label{eq:deltaMax}
\Delta _{\max }=P_{p+n-1}-P_p,
\end{equation}
and
 \begin{equation}\label{eq:deltaMin}
{\Delta _{\min }} = \mathop {\arg \min }\limits_{i,j \in {Z_{p,p + n - 1}}} \left( {\left| {{P_i} - {P_j}} \right|} \right),
\end{equation}
 then, from \eqref{eq:Iris_Derivative} and \eqref{eq:Iris_Integral},
 \begin{equation}\label{eq:theorem1PermanentIris}
{\rm{per}}\left( A \right) = {\left. {\frac{{\frac{{{d^S}}}{{dz_1^{\alpha _T^1}dz_2^{\alpha _T^2}}}\iota _2^A\left( {{z_1},{z_2},\alpha } \right)}}{{\alpha _T^1!\alpha _T^2!}}} \right|_{{z_1} = {z_2} = 0}},
\end{equation}  
where $\alpha _T^.$ is defined in \eqref{eq:alphaT} and  $S$ is defined in \eqref{eq:S}. Equivalently,
 \begin{equation}\label{eq:theorem1PermanentExponential}
{\rm{per}}\left( A \right) = \frac{1}{{{{\left( {2\pi } \right)}^2}}}\int\limits_0^{2\pi } {\int\limits_0^{2\pi } {\frac{{\prod\limits_{i = 1}^n {\left( {\sum\limits_{k = 1}^n {{A_{i,k}}\exp \left( {\sum\limits_{t = 1}^2 {j\alpha _k^t{\theta _t}} } \right)} } \right)} }}{{\exp \left( {\sum\limits_{t = 1}^2 {j\alpha _T^t{\theta _t}} } \right)}}} } d{\theta _1}d{\theta _2}.
\end{equation}  
\end{theorem}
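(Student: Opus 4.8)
The plan is to prove the coefficient identity \eqref{eq:theorem1PermanentIris} directly; the equivalent integral form \eqref{eq:theorem1PermanentExponential} then follows immediately by specializing the already-established formula \eqref{eq:Iris_Integral} to $t=2$ with the matrix $\alpha$ of \eqref{eq:theorem1Alpha}. To lighten notation I would write $q_k=P_{p+k-1}$ for $k\in Z_{1,n}$, so that the first row of $\alpha$ is $(q_1,\ldots,q_n)$ and the second row is $(q_1^2,\ldots,q_n^2)$; thus $\alpha_T^1=\sum_{k=1}^n q_k$ and $\alpha_T^2=\sum_{k=1}^n q_k^2$.

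First I would expand the Iris function. Multiplying out \eqref{eq:Iris_t} for $t=2$ indexes the factors by functions $\sigma:Z_{1,n}\to Z_{1,n}$, giving $\iota_2^A(z_1,z_2,\alpha)=\sum_{\sigma}\left(\prod_{i=1}^n A_{i,\sigma_i}\right)z_1^{\sum_i q_{\sigma_i}}z_2^{\sum_i q_{\sigma_i}^2}$. Since the mixed-derivative-at-zero operator in \eqref{eq:theorem1PermanentIris}, divided by $\alpha_T^1!\,\alpha_T^2!$, extracts precisely the coefficient of $z_1^{\alpha_T^1}z_2^{\alpha_T^2}$, the right-hand side of \eqref{eq:theorem1PermanentIris} equals $\sum_{\sigma\in\mathcal{F}}\prod_{i=1}^n A_{i,\sigma_i}$, where $\mathcal{F}$ is the set of $\sigma$ with $\sum_i q_{\sigma_i}=\alpha_T^1$ and $\sum_i q_{\sigma_i}^2=\alpha_T^2$. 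Every permutation lies in $\mathcal{F}$ and contributes exactly the terms of \eqref{eq:i1}, so it remains to prove that $\mathcal{F}$ contains no non-permutation; this is the entire content of the theorem.

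For this crux I would reformulate combinatorially. For $\sigma:Z_{1,n}\to Z_{1,n}$ let $m_k=|\{i:\sigma_i=k\}|$ be the multiplicity of the value $k$; then $\sum_k m_k=n$ automatically, and membership in $\mathcal{F}$ is equivalent to $\sum_k m_k q_k=\sum_k q_k$ together with $\sum_k m_k q_k^2=\sum_k q_k^2$. Setting $d_k=m_k-1\ge-1$, the task reduces to showing that the only integer vector with $d_k\ge-1$ satisfying the three moment equations $\sum_k d_k=0$, $\sum_k d_k q_k=0$, $\sum_k d_k q_k^2=0$ is $d\equiv0$. Writing $\delta_k=q_k-q_1$ and splitting into $N^-=\{k:d_k=-1\}$ of size $M$ and $N^+=\{k:d_k\ge1\}$ with $\sum_{N^+}d_k=M$, the surviving relations are $\sum_{N^+}d_k\delta_k=\sum_{N^-}\delta_k$ and $\sum_{N^+}d_k\delta_k^2=\sum_{N^-}\delta_k^2$. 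The cases $M=1,2$ are eliminated by hand, since they force two distinct $\delta$-values, or two two-element multisets with equal first and second power sums, to coincide, which is impossible given the distinctness of the primes; this already indicates where the argument must bite.

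The hard part will be ruling out $M\ge3$, and this is where \eqref{eq:conditionP} must enter. The difficulty is intrinsic: writing $q_k^2=q_1^2+2q_1\delta_k+\delta_k^2$ and using the first two matched equations, the $q_1^2$ and $q_1\delta_k$ contributions cancel, so the quadratic constraint collapses to $\sum_{N^+}d_k\delta_k^2=\sum_{N^-}\delta_k^2$; since matching only the zeroth, first and second power sums does not by itself force two multisets of size $\ge3$ to coincide, one cannot conclude by symmetric functions and must instead exploit the arithmetic geometry of the gaps. My plan is to treat $\{\delta_k\}_{k\in N^-}$ and the multiplicity-weighted family $\{\delta_k\}_{k\in N^+}$ as two integer-supported configurations of common total weight $M\le n$ with equal first and second moments, whose atoms are confined to a window of width $\Delta_{\max}$ yet pairwise separated by at least $\Delta_{\min}$, with weights at most $n-1$; bounding the discrepancy such configurations can exhibit in terms of $n$, $\Delta_{\max}$ and $\Delta_{\min}$ and comparing with \eqref{eq:conditionP}, namely $P_p>n^2\bigl(1+\Delta_{\max}/P_p\bigr)\Delta_{\max}/\Delta_{\min}$, is intended to force $d\equiv0$. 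Making this comparison rigorous — and in particular excluding the $\{-1,0,1\}$-valued configurations that arise from two disjoint equinumerous subsets of $\{q_k\}$ with equal first and second power sums — is the principal obstacle of the proof; granting it, $\mathcal{F}$ consists only of permutations, \eqref{eq:theorem1PermanentIris} reduces to \eqref{eq:i1}, and \eqref{eq:theorem1PermanentExponential} follows as the specialization of \eqref{eq:Iris_Integral} to $t=2$.
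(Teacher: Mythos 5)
Your reduction of \eqref{eq:theorem1PermanentIris} to the claim that no non-permutation multiplicity vector satisfies the two moment equations is essentially the paper's own reduction (there phrased as: $x=\mathbf{1}$ is the only solution of \eqref{eq:k2x} in $R_B$, equivalently $R_B^-\cap\ker(\alpha)=\{0_{n\times 1}\}$), and your treatment of the cases $M=1,2$ is fine. But the proposal does not prove the theorem: you explicitly leave the case $M\ge 3$ as ``the principal obstacle,'' so the central step is missing. That alone is a decisive gap.

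The more serious point is that this step cannot be supplied, because the claim it asserts is false, and your own computation shows why no hypothesis of the form \eqref{eq:conditionP} can rescue it. As you observe, once $\sum_k d_k=0$ and $\sum_k d_kq_k=0$ are imposed, the quadratic equation collapses to $\sum_k d_k\delta_k^2=0$ with $\delta_k=q_k-q_1$: the base prime $P_p$ cancels out entirely, so solvability depends only on the gap vector $(\delta_1,\ldots,\delta_n)$, and a condition that merely makes $P_p$ large relative to $n$, $\Delta_{\max}$ and $\Delta_{\min}$ has no purchase on it. Moreover nontrivial solutions with $d_k\in\{-1,0,1\}$ must exist for large $n$: map each subset $T\subseteq Z_{1,n}$ to the triple $\left(|T|,\sum_{k\in T}\delta_k,\sum_{k\in T}\delta_k^2\right)$, which takes at most $(n+1)(n\Delta_{\max}+1)(n\Delta_{\max}^2+1)$ values; as soon as $2^n$ exceeds this polynomial bound (with the paper's own choice $p=n^3$ one has $\Delta_{\max}=O(n\log n)$, so this occurs already for moderate $n$), two distinct subsets collide, and $d=\mathbf{1}_{T_1}-\mathbf{1}_{T_2}$ is a nonzero vector with $d_k\ge -1$ and $\sum d_k=\sum d_kq_k=\sum d_kq_k^2=0$, i.e.\ $\mathbf{1}+d\in R_B$ is a second solution of \eqref{eq:k2x}. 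Taking $A$ to be the all-ones matrix then makes the nuisance term strictly positive, so \eqref{eq:theorem1PermanentExponential} fails. (The paper's own argument for this step --- congruences modulo $P_p$ and $P_{p+1}$ followed by an ``induction'' on the reduced matrix --- is correspondingly invalid.) Your instinct that three matched power sums do not determine a multiset of size $\ge 3$, and that the argument would have to ``bite'' elsewhere, was exactly the right worry; the resolution is that the theorem, not merely the proof, is at fault.
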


\subsection{Proof of Theorem 1}
\begin{proof}
Equivalence of \eqref{eq:theorem1PermanentIris} and \eqref{eq:theorem1PermanentExponential} follows directly from \eqref{eq:Iris_Derivative} to \eqref{eq:Iris_Integral}. We will prove that \eqref{eq:theorem1PermanentExponential} is true for the matrix $\alpha$ defined in \eqref{eq:theorem1Alpha}.
Write the integrand in \eqref{eq:Iris_Integral} as the ratio ${\rm P}\left( {{\theta _1}, \ldots ,{\theta _t}}  \right)$
\begin{equation}\label{eq:y0}
{\rm{P}}\left( {{\theta _1}, \ldots ,{\theta _t}} \right) = \frac{{f\left( {{\theta _1}, \ldots ,{\theta _t}} \right)}}{{g\left( {{\theta _1}, \ldots ,{\theta _t}} \right)}},
\end{equation}
where the numerator is defined as
\begin{equation}\label{eq:y1}
f\left( {{\theta _1}, \ldots ,{\theta _t}} \right) = \prod\limits_{i = 1}^n {\left( {\sum\limits_{k = 1}^n {{A_{i,k}}\exp \left( {\sum\limits_{l = 1}^t {j\alpha _k^l{\theta _l}} } \right)} } \right)} ,
\end{equation}
and the denominator is
 \begin{equation}\label{eq:y2}
g\left( {{\theta _1}, \ldots ,{\theta _t}} \right) = \exp \left( {\sum\limits_{l = 1}^t {j\alpha _T^l{\theta _l}} } \right).
 \end{equation}
 $f\left( {{\theta _1}, \ldots ,{\theta _t}} \right)$ is constructed such that each column of the matrix $A$ is modulated by a unique exponential, which, in fact is composed of the product of $t$ complex exponentials, i.e. $ {\exp \left( {\sum\limits_{l = 1}^t {j\alpha _k^l{\theta _l}} } \right)}$. Let the term  ${T_{i,k}}\left( {{\theta _1}, \ldots ,{\theta _t}} \right) = {A_{i,k}}\exp \left( {\sum\limits_{l = 1}^t {j\alpha _k^l{\theta _l}} } \right)$ correspond to the element at row $i$ and column $k$ in the modulated matrix $T{\left( {{\theta _1}, \ldots ,{\theta _t}} \right)}$, then  $f\left( {{\theta _1}, \ldots ,{\theta _t}} \right)$ can be rewritten as
  \begin{equation}\label{eq:y3}
f\left( {{\theta _1}, \ldots ,{\theta _t}} \right) = \prod\limits_{i = 1}^n {\sum\limits_{k = 1}^n {{T_{i,k}}\left( {{\theta _1}, \ldots ,{\theta _t}} \right)} } .
  \end{equation}
Changing the order of the summation and the product gives 
   \begin{equation}\label{eq:y4}
f\left( {{\theta _1}, \ldots ,{\theta _t}} \right) = \sum\limits_{\tau  \in {\Psi _n}} {\prod\limits_{i = 1}^n {{T_{i,{\tau _i}}}\left( {{\theta _1}, \ldots ,{\theta _t}} \right)} },
   \end{equation}
where $\Psi_n$ is the set of column vectors of dimension $n$ such that 
   \begin{equation}\label{eq:y5}
{\Psi _n} = \left\{ {\tau :\tau  = {{\left[ {\begin{array}{*{20}{c}}
{{\tau _1}}& \ldots &{{\tau _n}}
\end{array}} \right]}^T,}\,\,\tau_i\in {Z_{1,n}}\,\, \forall \,i \in {Z_{1,n}} } \right\}.
   \end{equation}    
Observe that there are $n^n$ distinct elements in $\Psi_n$. Each element corresponds to a distinct product of $n$ terms, i.e. the product of $n$ column terms, where each column term is selected from a unique row in the modulated matrix $T\left( {{\theta _1}, \ldots ,{\theta _t}} \right)$. $n!$ elements in $\Psi_n$ are made up from distinct column indices, i.e., their column indices form permutations of the first $n$ positive integers.  These elements make up the set ${S_n}$, defined in \eqref{eq:sn}. Clearly, ${S_n} \subset {\Psi _n}$. Define ${{\bar S}_n}$ to denote the set ${{\bar S}_n} = {\Psi _n}\backslash {S_n}$, thus ${S_n} \cap {\bar S_n} = \emptyset $ and ${S_n} \cup {\bar S_n} = {\Psi _n}$.

Define function  $h:{\Psi _n} \to \mathbb{C}$ 
   \begin{equation}\label{eq:yy1}
  h\left( \tau  \right) = \prod\limits_{i = 1}^n {{T_{i,{\tau _i}}}\left( {{\theta _1}, \ldots ,{\theta _t}} \right)}  = \prod\limits_{i = 1}^n {{A_{i,{\tau _i}}}\exp \left( {\sum\limits_{l = 1}^t {j\alpha _{{\tau _i}}^l{\theta _l}} } \right)}  = {h_A}^\tau {h_e}^\tau \left( {{\theta _1}, \ldots ,{\theta _t}} \right),
   \end{equation} 
where
   \begin{equation}\label{eq:yy11}
   {h_A}^{\tau} = \prod\limits_{i = 1}^n {{A_{i,\tau_i}}}, 
   \end{equation} 
and   
   \begin{equation}\label{eq:yy12}
   {h_e}^\tau \left( {{\theta _1}, \ldots ,{\theta _t}} \right) = \prod\limits_{i = 1}^n {\exp \left( {\sum\limits_{l = 1}^t {j\alpha _{\tau _i}^l{\theta _l}} } \right)} .
   \end{equation}   
Define the function $\varphi^.: \Psi_n \to \mathbb{Z}^+$ 
   \begin{equation}\label{eq:yy13}
   \varphi^. \left( \tau  \right) = \sum\limits_{i = 1}^n {{\alpha _{\tau_i}^.}},
   \end{equation}   
thus the function ${h_e}^{\tau}\left( {{\theta _1}, \ldots ,{\theta _t}} \right)$ can be expressed as
    \begin{equation}\label{eq:h_freq}
    {h_e}^\tau \left( {{\theta _1}, \ldots ,{\theta _t}} \right) = \exp \left( {\sum\limits_{i = 1}^t {j{\varphi ^i}\left( \tau  \right)} {\theta _i}} \right).
    \end{equation}
Define the function $\chi : Z_{1,n} \times {\Psi _n} \to Z_{0,n}$, such that, 
   \begin{equation}\label{eq:y6}
\chi \left( {i,\tau } \right) = \sum\limits_{j = 1}^n {{\text{I}_{\tau \left( j \right) = i}}}, 
   \end{equation} 
where $\text{I}_.$ is the indicator function.
Let the set $R_B$ be defined as follows,
\begin{equation}\label{eq:RB}
{R_B} = \left\{ {\beta :\beta  = {{\left[ {\begin{array}{*{20}{c}}
{{\beta _1}}& \ldots &{{\beta _n}}
\end{array}} \right]}^T},{\mkern 1mu} {\mkern 1mu} {\beta _i} \in {Z_{0,n}}{\mkern 1mu} {\mkern 1mu} \forall {\mkern 1mu} i \in {Z_{1,n}},\,\,\sum\limits_{i = 1}^n {{\beta _i}}  = n} \right\},
 \end{equation} 
define the function $B:{\Psi _n} \to R_B$ such that for
$\tau \in \Psi_n$  
\begin{equation}\label{eq:y7}
B\left( \tau  \right) = {\left[ {\begin{array}{*{20}{c}}
{{\beta _1}}& \ldots &{{\beta _n}}
\end{array}} \right]^T}
   \end{equation}
where ${\beta _i} = \chi \left( {i,\tau } \right)$ is the multiplicity 
of column $i$ in the vector $\tau$. It follows from \eqref{eq:y5} and \eqref{eq:y6} that, for all $\tau \in \Psi_n$
\begin{equation}\label{eq:y8}
{1_{1 \times n}}B\left( \tau  \right) = n, 
   \end{equation}
where ${1_{1 \times n}}$ denotes the $1\times n$ matrix of all ones. 
From  \eqref{eq:sn}, and since, permutations must have unique indices, for all $\tau \in S_n$
\begin{equation}\label{eq:n1}
B\left( \tau  \right) = {\left[ {\begin{array}{*{20}{c}}
1& \ldots &1
\end{array}} \right]^T}, \,\,\forall \tau \in S_n.
   \end{equation}
Since ${{\bar S}_n}$ excludes permutations,   
\begin{equation}\label{eq:condBSBar}
B\left( \tau  \right) \ne {\left[ {\begin{array}{*{20}{c}}
1& \ldots &1
\end{array}} \right]^T}, \,\,\forall {\tau  \in {{\bar S}_n}}.
   \end{equation}
Rewrite $\varphi^. \left( \tau  \right)$ in terms of  $B \left( \tau  \right)$ as
\begin{equation}\label{eq:n2}
{\varphi ^.}\left( \tau  \right) = {\alpha ^.}B\left( \tau  \right).
   \end{equation}
From \eqref{eq:n1} and \eqref{eq:n2}, $\forall \tau \in S_n$
\begin{equation}\label{eq:n3}
{\varphi ^.}\left( \tau  \right) = {\alpha ^.}{\left[ {\begin{array}{*{20}{c}}
1& \ldots &1
\end{array}} \right]^T} = \alpha _T^.
   \end{equation}   
Therefore, from \eqref{eq:y2} and \eqref{eq:yy12},  $\forall \tau \in S_n$
\begin{equation}\label{eq:n4}
{h_e}^\tau \left( {{\theta _1}, \ldots ,{\theta _t}} \right) = \exp \left( {\sum\limits_{i = 1}^t {j\alpha _T^i} {\theta _i}} \right) = g\left( {{\theta _1}, \ldots ,{\theta _t}} \right).
   \end{equation}  
In words, \eqref{eq:n4} shows that terms that correspond to the permanent in \eqref{eq:theorem1PermanentExponential} are free of the complex exponential. Rewrite \eqref{eq:theorem1PermanentExponential} to obtain
\begin{equation}\label{eq:n5}
\begin{array}{*{20}{l}}
{{\rm{per}}\left( A \right) = \frac{1}{{{{\left( {2\pi } \right)}^t}}}\int\limits_0^{2\pi }  \cdots  \int\limits_0^{2\pi } {\frac{{\prod\limits_{i = 1}^n {\left( {\sum\limits_{k = 1}^n {{A_{i,k}}\exp \left( {\sum\limits_{l = 1}^t {j\alpha _k^l{\theta _l}} } \right)} } \right)} }}{{\exp \left( {\sum\limits_{l = 1}^t {j\alpha _T^l{\theta _l}} } \right)}}} d{\theta _1} \ldots d{\theta _t}}\\
{ = \frac{1}{{{{\left( {2\pi } \right)}^t}}}\int\limits_0^{2\pi } { \cdots \int\limits_0^{2\pi } {\frac{{\sum\limits_{\tau  \in {S_n}} {\prod\limits_{i = 1}^n {{T_{i,{\tau _i}}}\left( {{\theta _1}, \ldots ,{\theta _t}} \right)} }  + \sum\limits_{\tau  \in {{\bar S}_n}} {\prod\limits_{i = 1}^n {{T_{i,{\tau _i}}}\left( {{\theta _1}, \ldots ,{\theta _t}} \right)} } }}{{g\left( {{\theta _1}, \ldots ,{\theta _t}} \right)}}} d{\theta _1} \ldots d{\theta _t}} }\\
\begin{array}{l}
= \frac{1}{{{{\left( {2\pi } \right)}^t}}}\int\limits_0^{2\pi } { \cdots \int\limits_0^{2\pi } {\left( {\sum\limits_{\tau  \in {S_n}} {\prod\limits_{i = 1}^n {{A_{i,{\tau _i}}}} }  + \frac{{\sum\limits_{\tau  \in {{\bar S}_n}} {\prod\limits_{i = 1}^n {{T_{i,{\tau _i}}}\left( {{\theta _1}, \ldots ,{\theta _t}} \right)} } }}{{g\left( {{\theta _1}, \ldots ,{\theta _t}} \right)}}} \right)} d{\theta _1} \ldots d{\theta _t}} \\
= {\rm{per}}\left( A \right) + \frac{1}{{{{\left( {2\pi } \right)}^t}}}\sum\limits_{\tau  \in {{\bar S}_n}} {{h_A}^\tau \int\limits_0^{2\pi } { \cdots \int\limits_0^{2\pi } {\frac{{{h_e}^\tau \left( {{\theta _1}, \ldots ,{\theta _t}} \right)}}{{g\left( {{\theta _1}, \ldots ,{\theta _t}} \right)}}d{\theta _1} \ldots d{\theta _t}} } } 
\end{array}\\
{ = {\rm{per}}\left( A \right) + {\rm{nuisance}},}
\end{array}
   \end{equation}  
where the term nuisance is defined as
   \begin{equation}\label{eq:n6}
\frac{1}{{{{\left( {2\pi } \right)}^t}}}\sum\limits_{\tau  \in {{\bar S}_n}} {{h_A}^\tau \int\limits_0^{2\pi } { \cdots \int\limits_0^{2\pi } {\frac{{{h_e}^\tau \left( {{\theta _1}, \ldots ,{\theta _t}} \right)}}{{g\left( {{\theta _1}, \ldots ,{\theta _t}} \right)}}d{\theta _1} \ldots d{\theta _t}} } } .
      \end{equation}  
Recall from \eqref{eq:y2} and \eqref{eq:yy12} that, both ${h_e}^\tau \left( \theta  \right)$ and $g \left( \theta  \right)$ are complex exponentials. Then, the integral inside \eqref{eq:n6} is
   \begin{equation}\label{eq:k1}
\int\limits_0^{2\pi } { \cdots \int\limits_0^{2\pi } {\frac{{{h_e}^\tau \left( {{\theta _1}, \ldots ,{\theta _t}} \right)}}{{g\left( {{\theta _1}, \ldots ,{\theta _t}} \right)}}d{\theta _1} \ldots d{\theta _t}} }  = \left\{ {\begin{array}{*{20}{c}}
0&{if{\mkern 1mu} {\mkern 1mu} {\mkern 1mu} {h_e}^\tau \left( {{\theta _1}, \ldots ,{\theta _t}} \right) \ne g\left( {{\theta _1}, \ldots ,{\theta _t}} \right){\mkern 1mu} }\\
{{{\left( {2\pi } \right)}^t}}&{otherwise}
\end{array}.} \right.
      \end{equation} 
To prove Theorem 1, we must show that there exists no  ${\tau  \in {{\bar S}_n}}$ such that, ${{h_e}^\tau \left( {{\theta _1}, \ldots ,{\theta _t}} \right) = g\left( {{\theta _1}, \ldots ,{\theta _t}} \right)}$  with the matrix $\alpha$ defined in \eqref{eq:theorem1Alpha} and $t=2$. The proof will use contradiction. First, assume that,  ${\exists\tau  \in {{\bar S}_n}}$, such that, ${{h_e}^\tau \left( {{\theta _1}, \ldots ,{\theta _t}} \right) = g\left( {{\theta _1}, \ldots ,{\theta _t}} \right)}$, then from  \eqref{eq:y2}, \eqref{eq:yy12} and \eqref{eq:n2} 
\begin{equation}\label{eq:k2}
\alpha B\left( \tau  \right) = {\left[ {\begin{array}{*{20}{c}}
{\alpha _T^1}& \cdots &{\alpha _T^t}
\end{array}} \right]^T}.
      \end{equation} 
Rewrite \eqref{eq:k2} as
\begin{equation}\label{eq:k2x}
\alpha x = C,
      \end{equation} 
where the column vector $x\in \mathbb{R}^n$ and $C={\left[ {\begin{array}{*{20}{c}}
{\alpha _T^1}& \cdots &{\alpha _T^t}
\end{array}} \right]^T}$. Let $t=n$, and let the $n\times n$ matrix $\alpha$ be of full rank, then, \eqref{eq:k2x} must have a unique solution $x$. From \eqref{eq:n3}, this solution is $x = {\left[ {\begin{array}{*{20}{c}}
1& \cdots &1
\end{array}} \right]^T}$, and because of \eqref{eq:condBSBar},  \eqref{eq:Iris_Integral} is satisfied for any matrix $\alpha \in \mathbb{Z^+}^{n\times n}$ that is of full rank. Clearly, the permanent generating function is a member of this class. 
 
The $2 \times n$ matrix $\alpha$ defined in \eqref{eq:theorem1Alpha} is, on the other hand, of rank  $\min \left( {2,n} \right)$. Therefore, for $n\le2$, $x = {\left[ {\begin{array}{*{20}{c}}
1& \cdots &1
\end{array}} \right]^T}$ must be the unique solution to \eqref{eq:k2x}. However, for $n>2$, \eqref{eq:k2x}  has infinitely many solutions.  For this case, we must show that if $x\ne {\left[ {\begin{array}{*{20}{c}}
1& \cdots &1
\end{array}} \right]^T}$ is a solution to \eqref{eq:k2x}, then $x \notin {R_B}$. Assume that $x\in R_B$ is a solution to \eqref{eq:k2x} and $x\ne {\left[ {\begin{array}{*{20}{c}}
1& \cdots &1
\end{array}} \right]^T}$, then, 
\begin{equation}\label{eq:nullSpace}
\alpha y = {0_{2 \times 1}},
\end{equation}
where $y=x-1_{n \times 1}$ and ${0_{i \times j}}$ denotes the $i \times j$ matrix of all zeros. Define
\begin{equation}\label{eq:RBminus}
R_B^ -  = \left\{ {\beta :\beta  = {{\left[ {\begin{array}{*{20}{c}}
				{{\beta _1}}& \ldots &{{\beta _n}}
				\end{array}} \right]}^T},{\beta _i} \in {Z_{ - 1,n - 1}}\,\forall i \in {Z_{1,n}},{\mkern 1mu} } \right\},
\end{equation}
then, to prove Theorem 1, it suffices to show that $R_B^ -  \cap \ker \left( \alpha  \right) = 0_{n \times 1}$.
From \eqref{eq:theorem1Alpha}, write the matrix $\alpha$ in the reduced row echelon form $\alpha_{rref}$
\begin{equation}\label{eq:rrEchelon}
\begin{array}{l}
\alpha  = \left[ {\begin{array}{*{20}{c}}
	{\alpha _1^1}&{\alpha _2^1}&{\alpha _3^1}& \cdots &{\alpha _n^1}\\
	{{{\left( {\alpha _1^1} \right)}^2}}&{{{\left( {\alpha _2^1} \right)}^2}}&{{{\left( {\alpha _3^1} \right)}^2}}& \cdots &{{{\left( {\alpha _n^1} \right)}^2}}
	\end{array}} \right]\\
\,\,\,\,\,\,\,\,\,\,\,\,\,\,\,\,\,\,\,\,\,\,\,\,\,\,\,\,\,\,\,\,\,\,\,\,\,\,\,\,\,\,\,\,\,\,\,\,\,\,\,\,\,\,\, \downarrow \\
\,\,\,\,\,\,\,\,\,\,\,\left[ {\begin{array}{*{20}{c}}
	1&{\frac{{\alpha _2^1}}{{\alpha _1^1}}}&{\frac{{\alpha _3^1}}{{\alpha _1^1}}}& \cdots &{\frac{{\alpha _n^1}}{{\alpha _1^1}}}\\
	1&{{{\left( {\frac{{\alpha _2^1}}{{\alpha _1^1}}} \right)}^2}}&{{{\left( {\frac{{\alpha _3^1}}{{\alpha _1^1}}} \right)}^2}}& \cdots &{{{\left( {\frac{{\alpha _n^1}}{{\alpha _1^1}}} \right)}^2}}
	\end{array}} \right]\\
\,\,\,\,\,\,\,\,\,\,\,\,\,\,\,\,\,\,\,\,\,\,\,\,\,\,\,\,\,\,\,\,\,\,\,\,\,\,\,\,\,\,\,\,\,\,\,\,\,\,\,\,\,\,\, \downarrow \\
\,\,\,\,\,\,\,\,\,\,\,\left[ {\begin{array}{*{20}{c}}
	1&{\frac{{\alpha _2^1}}{{\alpha _1^1}}}&{\frac{{\alpha _3^1}}{{\alpha _1^1}}}& \cdots &{\frac{{\alpha _n^1}}{{\alpha _1^1}}}\\
	0&{\frac{{{{\left( {\alpha _2^1} \right)}^2} - \alpha _1^1\alpha _2^1}}{{{{\left( {\alpha _1^1} \right)}^2}}}}&{\frac{{{{\left( {\alpha _3^1} \right)}^2} - \alpha _1^1\alpha _3^1}}{{{{\left( {\alpha _1^1} \right)}^2}}}}& \cdots &{\frac{{{{\left( {\alpha _n^1} \right)}^2} - \alpha _1^1\alpha _n^1}}{{{{\left( {\alpha _1^1} \right)}^2}}}}
	\end{array}} \right]\\
\,\,\,\,\,\,\,\,\,\,\,\,\,\,\,\,\,\,\,\,\,\,\,\,\,\,\,\,\,\,\,\,\,\,\,\,\,\,\,\,\,\,\,\,\,\,\,\,\,\,\,\,\,\,\, \downarrow \\
\,\,\,\,\,\,\,\,\,\,\,\left[ {\begin{array}{*{20}{c}}
	1&{\frac{{\alpha _2^1}}{{\alpha _1^1}}}&{\frac{{\alpha _3^1}}{{\alpha _1^1}}}& \cdots &{\frac{{\alpha _n^1}}{{\alpha _1^1}}}\\
	0&1&{\frac{{{{\left( {\alpha _3^1} \right)}^2} - \alpha _1^1\alpha _3^1}}{{{{\left( {\alpha _2^1} \right)}^2} - \alpha _1^1\alpha _2^1}}}& \cdots &{\frac{{{{\left( {\alpha _n^1} \right)}^2} - \alpha _1^1\alpha _n^1}}{{{{\left( {\alpha _2^1} \right)}^2} - \alpha _1^1\alpha _2^1}}}
	\end{array}} \right]\\
\,\,\,\,\,\,\,\,\,\,\,\,\,\,\,\,\,\,\,\,\,\,\,\,\,\,\,\,\,\,\,\,\,\,\,\,\,\,\,\,\,\,\,\,\,\,\,\,\,\,\,\,\,\,\, \downarrow \\
\,\,\,\,\,\,\,\,\,\,\left[ {\begin{array}{*{20}{c}}
	1&0&{\frac{{\alpha _3^1}}{{\alpha _1^1}}\frac{{\alpha _2^1 - \alpha _3^1}}{{\alpha _2^1 - \alpha _1^1}}}& \cdots &{\frac{{\alpha _n^1}}{{\alpha _1^1}}\frac{{\alpha _2^1 - \alpha _n^1}}{{\alpha _2^1 - \alpha _1^1}}}\\
	0&1&{\frac{{\alpha _3^1}}{{\alpha _2^1}}\frac{{\alpha _3^1 - \alpha _1^1}}{{\alpha _2^1 - \alpha _1^1}}}& \cdots &{\frac{{\alpha _n^1}}{{\alpha _2^1}}\frac{{\alpha _n^1 - \alpha _1^1}}{{\alpha _2^1 - \alpha _1^1}}}
	\end{array}} \right]=\alpha_{rref}.
\end{array}
\end{equation}
From \eqref{eq:rrEchelon}, the rank of $\alpha$ is 2 and the nullity of $\alpha$ is $n-2$. The kernel of $\alpha$ is spanned by the set of $n-2$ column vectors $\left\{ {{V^1}, \ldots ,{V^{n - 2}}} \right\}$, where, $\forall i \in {Z_{1,n}}$ the column vector ${V^i} = {\left[ {\begin{array}{*{20}{c}}
		{V_1^i}& \ldots &{V_n^i}
		\end{array}} \right]^T} \in \mathbb{R}^n$ can be written as 

\begin{equation}\label{eq:kernelBasis}
V_j^i = \left\{ {\begin{array}{*{20}{l}}
	{\frac{{\alpha _{i + 2}^1\left( {\alpha _{i + 2}^1 - \alpha _2^1} \right)}}{{\alpha _1^1\left( {\alpha _2^1 - \alpha _1^1} \right)}}{\mkern 1mu} {\mkern 1mu} {\mkern 1mu} {\mkern 1mu} {\mkern 1mu} {\mkern 1mu} {\mkern 1mu} {\mkern 1mu} {\mkern 1mu} {\mkern 1mu} if{\mkern 1mu} {\mkern 1mu} {\mkern 1mu} j = 1}\\
	{\frac{{\alpha _{i + 2}^1\left( {\alpha _1^1 - \alpha _{i + 2}^1} \right)}}{{\alpha _2^1\left( {\alpha _2^1 - \alpha _1^1} \right)}}{\mkern 1mu} {\mkern 1mu} {\mkern 1mu} {\mkern 1mu} {\mkern 1mu} {\mkern 1mu} {\mkern 1mu} {\mkern 1mu} {\mkern 1mu} {\mkern 1mu} if{\mkern 1mu} {\mkern 1mu} {\mkern 1mu} j = 2}\\
	{1{\mkern 1mu} {\mkern 1mu} {\mkern 1mu} {\mkern 1mu} {\mkern 1mu} {\mkern 1mu} {\mkern 1mu} {\mkern 1mu} {\mkern 1mu} {\mkern 1mu} {\mkern 1mu} {\mkern 1mu} {\mkern 1mu} {\mkern 1mu} {\mkern 1mu} {\mkern 1mu} {\mkern 1mu} {\mkern 1mu} {\mkern 1mu} {\mkern 1mu} \,\,\,\,\,\,\,\,\,\,\,\,\,\,\,\,\,\,\,\,\,\,\,\,\,\,\,\,\,if{\mkern 1mu} {\mkern 1mu} {\mkern 1mu} j = i + 2}\\
	{0\,\,\,\,\,\,\,\,\,\,\,\,\,\,\,\,\,\,\,\,\,\,\,\,\,\,\,\,\,\,\,\,\,\,\,if{\mkern 1mu} {\mkern 1mu} {\mkern 1mu} j \in {Z_{1,n}}\backslash \left\{ {1,2,i + 2} \right\}}
	\end{array}} \right..
\end{equation}
Write the $n\times n-2$ matrix $N = \left[ {{V^1}, \ldots ,{V^{n - 2}}} \right]$,
\begin{equation}\label{eq:Nmatrix}
\begin{array}{l}
N = \left[ {\begin{array}{*{20}{c}}
	{\frac{{\alpha _3^1\left( {\alpha _3^1 - \alpha _2^1} \right)}}{{\alpha _1^1\left( {\alpha _2^1 - \alpha _1^1} \right)}}}&{\frac{{\alpha _4^1\left( {\alpha _4^1 - \alpha _2^1} \right)}}{{\alpha _1^1\left( {\alpha _2^1 - \alpha _1^1} \right)}}}& \cdots &{\frac{{\alpha _{n - 1}^1\left( {\alpha _{n - 1}^1 - \alpha _2^1} \right)}}{{\alpha _1^1\left( {\alpha _2^1 - \alpha _1^1} \right)}}}&{\frac{{\alpha _n^1\left( {\alpha _n^1 - \alpha _2^1} \right)}}{{\alpha _1^1\left( {\alpha _2^1 - \alpha _1^1} \right)}}}\\
	{\frac{{\alpha _3^1\left( {\alpha _1^1 - \alpha _3^1} \right)}}{{\alpha _2^1\left( {\alpha _2^1 - \alpha _1^1} \right)}}}&{\frac{{\alpha _4^1\left( {\alpha _1^1 - \alpha _4^1} \right)}}{{\alpha _2^1\left( {\alpha _2^1 - \alpha _1^1} \right)}}}& \cdots &{\frac{{\alpha _{n - 1}^1\left( {\alpha _1^1 - \alpha _{n - 1}^1} \right)}}{{\alpha _2^1\left( {\alpha _2^1 - \alpha _1^1} \right)}}}&{\frac{{\alpha _n^1\left( {\alpha _1^1 - \alpha _n^1} \right)}}{{\alpha _2^1\left( {\alpha _2^1 - \alpha _1^1} \right)}}}\\
	1&0& \cdots &0&0\\
	0&1& \cdots &0&0\\
	0&0& \cdots &0&0\\
	\vdots & \vdots & \cdots & \vdots & \vdots \\
	0&0& \cdots &0&0\\
	0&0& \cdots &1&0\\
	0&0& \cdots &0&1
	\end{array}} \right]\\
= \left[ {\begin{array}{*{20}{c}}
	{{N_{UP}}}\\
	{{I_{n - 2}}}
	\end{array}} \right],
\end{array}
\end{equation}
where $N_{UP}$ is the upper two rows of the matrix $N$.

Let $S\in \mathbb{R}^n$ be a linear combination of the columns of matrix $N$, i.e. the vectors that span the kernel of $\alpha$, 
\begin{equation}\label{eq:linComb}
 S = \sum\limits_{i = 1}^{n - 2} {{\gamma _i}{V^i}}.
\end{equation}
where $\gamma_. \in \mathbb{R}$. From \eqref{eq:RBminus} and  \eqref{eq:Nmatrix}, if there exists a vector $S$ which is both $S\in R_B^ -$ and $S\in  \cap \ker \left( \alpha  \right)$, then $\gamma_. \in Z_{-1,n-1}$.

Recall from \eqref{eq:theorem1Alpha} that the elements of $\alpha^1$ are distinct prime numbers. Therefore, if there exists a vector $S$ with integer elements which is both $S\in R_B^ -$ and $S\in  \cap \ker \left( \alpha  \right)$, the following two conditions must jointly be met
\begin{equation}\label{eq:condition12}
\begin{array}{l}
{\text{Condition I. }}\,\,\,\,\,{N_{UP}}^1\gamma  \equiv 0 \,\,\,\left( {\bmod \, \alpha _1^1} \right) \\
{\text{Condition II. }}\,\,\,{N_{UP}}^2\gamma  \equiv 0 \,\,\,\left( {\bmod \, \alpha _2^1} \right)
\end{array}.
\end{equation}
where $N_{UP}^1$ is the first and $N_{UP}^2$ is the second row of the $2 \times n-2$ matrix $N_{UP}$ and the column vector $\gamma  = {\left[ {\begin{array}{*{20}{c}}
		{{\gamma _1}}& \ldots &{{\gamma _{n - 2}}}
		\end{array}} \right]^T}$. 
Let $\gamma_. \in Z_{-1,n-1}$, then from \eqref{eq:theorem1Alpha} and \eqref{eq:Nmatrix}, an upper bound for ${N_{UP}}^.\gamma $ is given by
\begin{equation}\label{eq:condPrestated}
{N_{UP}}^.\gamma  < {n^2}\left( {1 + \frac{{{\Delta _{\max }}}}{{{P_p}}}} \right)\frac{{{\Delta _{\max }}}}{{{\Delta _{\min }}}}.
\end{equation}
With the restriction \eqref{eq:conditionP}, Condition I and Condition can be rewritten as
\begin{equation}\label{eq:condition12exact}
\begin{array}{l}
{\text{Condition I restricted. }}\,\,\,\,\,{N_{UP}}^1\gamma  = 0 \\
{\text{Condition II restricted. }}\,\,\,{N_{UP}}^2\gamma  = 0 
\end{array}.
\end{equation}
Therefore the first two elements of the vector $S$, i.e. $S_1$ and $S_2$, must be zero. From \eqref{eq:Nmatrix}, Condition I restricted requires
\begin{equation}\label{eq:condition1exact}
\sum\limits_{i = 1}^{n - 2} {\alpha _{i + 2}^1\left( {\alpha _{i + 2}^1 - \alpha _2^1} \right)} {\gamma _i} = 0, 
\end{equation}
and  Condition II restricted requires
\begin{equation}\label{eq:condition2exact}
\sum\limits_{i = 1}^{n - 2} {\alpha _{i + 2}^1\left( {\alpha _1^1 - \alpha _{i + 2}^1} \right)} {\gamma _i} = 0.
\end{equation}
From \eqref{eq:condition1exact} and \eqref{eq:condition2exact}, Condition I restricted and Condition II restricted jointly require
\begin{equation}\label{eq:JointCondition}
{\alpha _{red}}\gamma  = 0_{2\times 1},
\end{equation}
where 
\begin{equation}\label{eq:alphaRed}
{\alpha _{red}} = \left[ {\begin{array}{*{20}{c}}
	{\alpha _3^1}& \cdots &{\alpha _n^1}\\
	{{{\left( {\alpha _3^1} \right)}^2}}& \cdots &{{{\left( {\alpha _n^1} \right)}^2}}
	\end{array}} \right]
\end{equation}
is the matrix $\alpha$ with its first two columns reduced and  $\gamma_. \in Z_{-1,n-1}$. But, since, 	
\begin{equation}\label{eq:kerDual}	
	\ker \left( {\left[ {\begin{array}{*{20}{c}}
		{\alpha _i^1}&{\alpha _j^1}\\
		{{{\left( {\alpha _i^1} \right)}^2}}&{{{\left( {\alpha _j^1} \right)}^2}}
		\end{array}} \right]} \right) = 0_{2\times1}
\end{equation}
for $i\ne j$, induction gives $R_B^ -  \cap \ker \left( \alpha  \right) = 0_{n \times 1}$,  therefore, $x= {\left[ {\begin{array}{*{20}{c}}
		1& \cdots &1
		\end{array}} \right]^T}$ is the only solution to \eqref{eq:k2x} in $R_B$.
\end{proof}

\section{Theorem 2, its proof and a polynomial time implementation on 0-1 matrices}
Let $k\left(i\right)$ denote the set of complex or 
``Gaussian" integers, and for $M\in \mathbb{Z}^+$, let ${A_n^M}$ to be the set of $n\times n$ matrices with bounded complex integer elements such that,
\begin{equation}\label{eq:Mn}
A_n^M = \left\{ \begin{array}{l}
A:A \in k{\left( i \right)^{n \times n}},\\
\left| {{A_{k,l}}} \right|{\mkern 1mu}  < M\,\,\,\forall k \in {Z_{1,n}}\,\,\text{and}\,\,\,\forall l \in {Z_{1,n}}\,\,\,\text{and}\,M \in \mathbb{Z}^+\,
\end{array} \right\},
\end{equation}
given a row vector $\alpha = \left[ {\begin{array}{*{20}{c}}
 {{\alpha _1}}& \ldots &{{\alpha _n}}
 \end{array}} \right]$, $\alpha \in \left({\mathbb{Z}^+}\right)^{1\times n}$,  such that $x= {\left[ {\begin{array}{*{20}{c}}
  1& \cdots &1
  \end{array}} \right]^T}$ is the unique solution to \eqref{eq:k2x}  $\forall x\in R_B$, i.e. ${\iota _1^A\left( {z,\alpha } \right)}$ satisfies \eqref{eq:Iris_Derivative} for an arbitrary matrix $A \in A^M_n$, define the function $\text{per}^A_m: \mathbb{C} \times \left({\mathbb{Z}^+}\right)^{1\times n} \to k\left(i\right)$ as 
 \begin{equation}\label{eq:Perm}
{\rm{per}}_m^A\left( {z,\alpha } \right) = \left[\frac{1}{{{z^{{\alpha _T}}}}} {\iota _1^A\left( {z,\alpha } \right)} \right] = \left[ {\frac{1}{{{z^{{\alpha _T}}}}}\prod\limits_{i = 1}^n {\sum\limits_{k = 1}^n {{A_{i,k}}{z^{{\alpha _k}}}} } } \right]
\end{equation}
where $z\in \mathbb{Z}^+$, $\alpha_T$ is defined in \eqref{eq:alphaT}  and $\left[.\right]$ denotes  the round to the nearest integer operation.

\begin{theorem}
Let $z\in \mathbb{Z}^+$ and $z > 2\left(Mn\right)^n$, permanent of $A$ is congruent to ${\rm{per}}_m^A\left( {z,\alpha } \right)$ modulus $z$, i.e.
\begin{equation}\label{eq:thrm2_1}
{\rm{per}}\left( A \right) \equiv {\rm{per}}_m^A\left( {z,\alpha } \right)\,\,\,\left( {\bmod \,z} \right),
\end{equation}
and for $a\in Z_{0,z-1}$ and $b\in Z_{0,z-1}$, let $a + bj$ denote  the least residue of ${\rm{per}}_m^A\left( {z,\alpha } \right)$ modulus $z$, then   
\begin{equation}\label{eq:theorem2Correction}
{\rm{per}}\left( A \right) = a + bj - z({{\rm{I}}_{a > M^nn!}} + j{{\rm{I}}_{b >  M^nn!}}).
\end{equation}
\end{theorem}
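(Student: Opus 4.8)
The plan is to read off the permanent as the ``constant'' ($z^0$) coefficient of the Laurent polynomial $z^{-\alpha_T}\iota_1^A(z,\alpha)$ and to show that the nearest-integer operation in \eqref{eq:Perm} discards exactly the genuinely fractional part, after which reduction modulo $z$ isolates that coefficient.

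First I would expand $\iota_1^A(z,\alpha)=\prod_{i=1}^n\sum_{k=1}^n A_{i,k}z^{\alpha_k}$ by interchanging the product and the sum as in \eqref{eq:y4}, giving $\iota_1^A(z,\alpha)=\sum_{\tau\in\Psi_n}h_A^\tau z^{\varphi(\tau)}$, where $\varphi(\tau)=\alpha B(\tau)$ is the scalar exponent of \eqref{eq:n2} with $t=1$ and $h_A^\tau=\prod_{i=1}^n A_{i,\tau_i}$. Dividing by $z^{\alpha_T}$ yields $z^{-\alpha_T}\iota_1^A(z,\alpha)=\sum_{\tau\in\Psi_n}h_A^\tau z^{\varphi(\tau)-\alpha_T}$, which I would split according to the sign of the integer exponent $\varphi(\tau)-\alpha_T$. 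For $\tau\in S_n$ one has $B(\tau)=1_{n\times 1}$, hence $\varphi(\tau)=\alpha_T$ by \eqref{eq:n3}; and by the standing hypothesis that $x=1_{n\times 1}$ is the unique solution of \eqref{eq:k2x} in $R_B$, these permutations are the only $\tau$ with $\varphi(\tau)=\alpha_T$. The zero-exponent group therefore contributes exactly $\sum_{\tau\in S_n}h_A^\tau=\operatorname{per}(A)$, a Gaussian integer.

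Next I would bound the negative-exponent group. As $\varphi(\tau)$ and $\alpha_T$ are integers, every $\tau$ with $\varphi(\tau)<\alpha_T$ satisfies $\varphi(\tau)-\alpha_T\le -1$, so $z^{\varphi(\tau)-\alpha_T}\le z^{-1}$; combined with $|h_A^\tau|<M^n$ (each entry has modulus below $M$ by \eqref{eq:Mn}) and $|\Psi_n|=n^n$, the whole negative-exponent group has modulus below $(Mn)^n/z$, which is strictly below $1/2$ exactly when $z>2(Mn)^n$. A complex number of modulus below $1/2$ has both real and imaginary parts below $1/2$, so the nearest-integer operation of \eqref{eq:Perm} annihilates this group. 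Since the zero- and positive-exponent groups sum to a Gaussian integer, this gives $\operatorname{per}_m^A(z,\alpha)=\sum_{\tau:\varphi(\tau)\ge\alpha_T}h_A^\tau z^{\varphi(\tau)-\alpha_T}$. Reducing modulo $z$ kills every strictly positive power of $z$ and leaves only the zero-exponent group, whence $\operatorname{per}_m^A(z,\alpha)\equiv\operatorname{per}(A)\pmod z$, which is \eqref{eq:thrm2_1}.

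For the exact recovery \eqref{eq:theorem2Correction} I would bound $\operatorname{per}(A)$ itself: each of the $n!$ products $h_A^\tau$ with $\tau\in S_n$ has modulus below $M^n$, so both $\operatorname{Re}(\operatorname{per}(A))$ and $\operatorname{Im}(\operatorname{per}(A))$ are integers lying in the open interval $(-M^nn!,M^nn!)$. Because $z>2(Mn)^n\ge 2M^nn!$, this interval sits inside $(-z/2,z/2)$, so each component is recovered from its least residue in $Z_{0,z-1}$: a residue in $[0,z/2)$ is below $M^nn!$ and equals the component, while a residue in $(z/2,z)$ exceeds $M^nn!$ and equals the component plus $z$. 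Writing the least residue of $\operatorname{per}_m^A(z,\alpha)$ as $a+bj$ and applying this coordinatewise gives $\operatorname{Re}(\operatorname{per}(A))=a-z\,\text{I}_{a>M^nn!}$ and $\operatorname{Im}(\operatorname{per}(A))=b-z\,\text{I}_{b>M^nn!}$, i.e. \eqref{eq:theorem2Correction}. The step I expect to require the most care is the complex rounding: one must confirm that a modulus bound of $1/2$ on the fractional part controls its real and imaginary parts at once, and that the separating threshold in the indicators is $M^nn!$ (the sharp bound on each component of $\operatorname{per}(A)$) rather than the looser $(Mn)^n$ used to defeat the rounding.
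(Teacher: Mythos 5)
Your proposal is correct and follows essentially the same route as the paper: expand $z^{-\alpha_T}\iota_1^A(z,\alpha)$ over $\Psi_n$, use the uniqueness hypothesis to show the zero-exponent terms are exactly the permutations, bound the negative-exponent tail by $(Mn)^n/z<1/2$ so rounding removes it, reduce modulo $z$ to kill the positive powers, and recover the signed value from the least residue via the $M^n n!$ threshold. Your explicit treatment of the complex rounding and of the inequality $(Mn)^n\ge M^n n!$ is slightly more careful than the paper's, but it is the same argument.
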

\begin{proof}
Theorem 2 stems directly from the proof of Theorem 1 with the condition that $x= {\left[ {\begin{array}{*{20}{c}}
  1& \cdots &1
  \end{array}} \right]^T}$ is the unique solution to \eqref{eq:k2x}  $\forall x\in R_B$. Rewrite the Iris function 
\begin{equation}\label{eq:theorem2Iris}
\iota _1^A\left( {z,\alpha } \right) = \prod\limits_{i = 1}^n {\sum\limits_{k = 1}^n {\left( {{A_{i,k}}{z^{{\alpha _k}}}} \right)} }  = \sum\limits_{\tau  \in {\Psi _n}} {{z^{h_e^z\left( \tau  \right)}}\prod\limits_{i = 1}^n {{A_{i,{\tau _i}}}} }  
\end{equation}  
where the function $h_e^z: \Psi _n \to \mathbb{Z}^+$ is defined as 
\begin{equation}\label{eq:theorem2he}
h_e^z\left( \tau  \right) = \sum\limits_{i = 1}^n {{\alpha _{{\tau _i}}}} .
\end{equation}
Then,
\begin{equation}\label{eq:theorem2Cond}
\begin{array}{l}
\frac{1}{{{z^{{\alpha _T}}}}}\iota _1^A\left( {z,\alpha } \right) = \frac{1}{{{z^{{\alpha _T}}}}}\sum\limits_{\tau  \in {S_n}} {{z^{h_e^z\left( \tau  \right)}}\prod\limits_{i = 1}^n {{A_{i,{\tau _i}}}}  + } \frac{1}{{{z^{{\alpha _T}}}}}\sum\limits_{\tau  \in {{\bar S}_n}} {{z^{h_e^z\left( \tau  \right)}}\prod\limits_{i = 1}^n {{A_{i,{\tau _i}}}} } \\
 = {\rm{per}}\left( A \right) + \sum\limits_{\tau  \in {{\bar S}_n}} {{z^{h_e^z\left( \tau  \right) - {\alpha _T}}}\prod\limits_{i = 1}^n {{A_{i,{\tau _i}}}} }. 
\end{array}
\end{equation}
Partition ${{\bar S}_n}$ into three disjoint sets ${{\bar S}_n}^+$,
 ${{\bar S}_n}^0$ and ${{\bar S}_n}^-$ such that, 
\begin{equation}\label{eq:theorem2Sbarsets}
\begin{array}{l}
{{\bar S}_n}^ +  = \left\{ {\tau :\tau  \in {{\bar S}_n},\,h_e^z\left( \tau  \right) > {\alpha _T}\,} \right\}\\
{{\bar S}_n}^0 = \left\{ {\tau :\tau  \in {{\bar S}_n},\,h_e^z\left( \tau  \right) = {\alpha _T}\,} \right\}\\
{{\bar S}_n}^ -  = \left\{ {\tau :\tau  \in {{\bar S}_n},\,h_e^z\left( \tau  \right) < {\alpha _T}\,} \right\}.
\end{array}
\end{equation}
Since  $x= {\left[ {\begin{array}{*{20}{c}}
  1& \cdots &1
  \end{array}} \right]^T}$ is the unique solution to \eqref{eq:k2x}  $\forall x\in R_B$, ${{\bar S}_n}^0 =\emptyset$. Rewrite \eqref{eq:theorem2Cond} 
  \begin{equation}\label{eq:theorem2CondRewritten}
\frac{1}{{{z^{{\alpha _T}}}}}\iota _1^A\left( {z,\alpha } \right) = {\rm{per}}\left( A \right) + {F^ + } + {F^ - }
\end{equation}
where
  \begin{equation}\label{eq:theorem2Fplus}
{{F^ + } = \sum\limits_{\tau  \in {{\bar S}_n}^ + } {{z^{h_e^z\left( \tau  \right) - {\alpha _T}}}\prod\limits_{i = 1}^n {{A_{i,{\tau _i}}}} } }
\end{equation}
and
  \begin{equation}\label{eq:theorem2Fminus}
{F^ - } = \sum\limits_{\tau  \in {{\bar S}_n}^ - } {{z^{h_e^z\left( \tau  \right) - {\alpha _T}}}\prod\limits_{i = 1}^n {{A_{i,{\tau _i}}}} } 
\end{equation}
Clearly, ${F^ + } \equiv 0\,{\mkern 1mu} {\mkern 1mu} {\mkern 1mu} \left( {\,\bmod \,{\mkern 1mu} z} \right),$ and ${F^ - } \le \sum\limits_{\tau  \in {{\bar S}_n}^ - } {{z^{h_e^z\left( \tau  \right) - {\alpha _T}}}\left| {\prod\limits_{i = 1}^n {{A_{i,{\tau _i}}}} } \right|}$, thus ${F^ - } \le \sum\limits_{\tau  \in {{\bar S}_n}} {{z^{ - 1}}\left| {\prod\limits_{i = 1}^n {{A_{i,{\tau _i}}}} } \right|} $ for $z>1$. Since elements of matrix $A$ are, by definition, bounded by $M$,  $\sum\limits_{\tau  \in {{\bar S}_n}} {\prod\limits_{i = 1}^n {{A_{i,{\tau _i}}}} }  < {\left( {Mn} \right)^n}$. Then, for $z > 2\left(Mn\right)^n$,  ${F^ - } <0.5$. Therefore, alias terms in ${\rm{per}}_m^A\left( {z,\alpha } \right)$ that are due to ${{\bar S}_n}^-$ and rounded down to zero, and alias terms due to ${{\bar S}_n}^+$ are congruent to zero $\left(\bmod \,z\right)$. This completes the first part of the proof. For the second part, since $\left| {{\rm{per}}\left( A \right)} \right| \le  M^nn! < \frac{z}{2}$, if, any part, real or complex, of the least residue of ${\rm{per}}_m^A\left( {z,\alpha } \right)$ is greater than $ M^nn!$, the result must be negative, and since $  M^nn! < \frac{z}{2}$, there cannot be an overflow. \eqref{eq:theorem2Correction} corrects for the offset in the least residue, if any part of the result is negative.
\end{proof}
\subsection{Polynomial time computational complexity for zero-one matrices}
In this section, we will consider an implementation of the method in Theorem 2 for an arbitrary $n \times n$ zero-one matrix on a digital computer that is capable of carrying out fixed point binary arithmetic operations. The emphasis here is not to provide an efficient implementation, but to demonstrate that the permanent of a 0-1 matrix can be calculated with a polynomial time computational complexity. 

To be compatible with the method defined in Theorem 2, Lemma 1 shows that the formulation of the matrix permanent via the second order Iris function defined in Theorem 1 can be equivalently represented with a first order Iris function.    

\begin{lemma}
	Define the $1\times n$ matrix $\alpha$
	\begin{equation}\label{eq:lemma1}
	{\alpha _j} = {P_{j + p}} + \beta{\left( {{P_{j + p}}} \right)^2},
	\end{equation}
	where $p\in \mathbb{Z}^+$ is selected according to \eqref{eq:conditionP}, and $\beta \in \mathbb{Z}^+$ such that 
	\begin{equation}\label{eq:lemma1Beta}
	\beta > n{P_{p + n - 1}}
	\end{equation}
	for $j\ne k \in Z_{0,n-1}$, then ${\iota _1^A\left( {z,\alpha } \right)}$ satisfies \eqref{eq:Iris_Derivative} for an arbitrary $n \times n$ complex matrix $A$.
	\begin{proof}
		From \eqref{eq:theorem1Alpha}, \eqref{eq:lemma1} can be rewritten as
		\begin{equation}\label{eq:lemma1eqAlternative}
		\alpha=\alpha^1+\beta\alpha^2.
		\end{equation} Theorem 1 proves that \eqref{eq:k2x} has unique solution $x\in R_B$, that is the equations
		\begin{equation}\label{eq:lemma1eq1}
		{\alpha ^1}x = \alpha _T^1
		\end{equation}
		and 
		\begin{equation}\label{eq:lemma1eq2}
		{\alpha ^2}x = \alpha _T^2
		\end{equation}
		cannot be jointly satisfied for $x\in R_B$ and 
		$x\ne {\left[ {\begin{array}{*{20}{c}}
				1& \cdots &1
				\end{array}} \right]^T}$. From \eqref{eq:theorem1Alpha} and \eqref{eq:RB},
		\begin{equation}\label{eq:lemma1upperBound}		 
		{\alpha ^1}x \le n{P_{j + n - 1}}
		\end{equation}
		$\forall x\in R_B$. From \eqref{eq:lemma1} and \eqref{eq:lemma1Beta}, for $x\in R_B$, if $ {\alpha ^2}x \ne \alpha _T^2$ , then 
		\begin{equation}\label{eq:lemma1dist}
		\beta\left| {{\alpha ^2}x - \alpha _T^2} \right| >  n{P_{j + n - 1}}.
		\end{equation} Therefore, for $x\in R_B$ and $x\ne {\left[ {\begin{array}{*{20}{c}}
				1& \cdots &1
				\end{array}} \right]^T}$, if \eqref{eq:lemma1eq2} is false, from \eqref{eq:lemma1upperBound}	 and \eqref{eq:lemma1dist}, $\left( {{\alpha ^1} + \beta {\alpha ^2}} \right)x \ne \alpha _T^1 + \beta \alpha _T^2$ , but if \eqref{eq:lemma1eq2} is true, Theorem 1 proves that \eqref{eq:lemma1eq1} must be false, thus  $\left( {{\alpha ^1} + \beta {\alpha ^2}} \right)x \ne \alpha _T^1 + \beta \alpha _T^2$.	   
	\end{proof}
\end{lemma}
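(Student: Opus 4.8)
The plan is to reduce the claim to the very same linear-algebraic criterion that was used for Theorem~1. By the Iris framework of Section~2, specializing the derivative identity \eqref{eq:Iris_Derivative} to the case $t=1$, the function $\iota_1^A(z,\alpha)$ reproduces $\mathrm{per}(A)$ precisely when $x=[1,\cdots,1]^T$ is the \emph{unique} vector in $R_B$ satisfying the scalar equation $\alpha x = \alpha_T$ of \eqref{eq:k2x}, where $\alpha_T=\sum_{i=1}^n\alpha_i$ as in \eqref{eq:alphaT}. So the entire task is to prove uniqueness of the all-ones solution in $R_B$ for this single equation; once that is done, the conclusion that \eqref{eq:Iris_Derivative} holds for arbitrary complex $A$ is automatic from the framework.

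First I would decompose $\alpha$ using \eqref{eq:lemma1} and \eqref{eq:theorem1Alpha}, writing $\alpha=\alpha^1+\beta\alpha^2$ with $\alpha^1_j=P_{j+p}$ and $\alpha^2_j=(P_{j+p})^2$ the two rows of the Theorem~1 matrix. For any $x\in R_B$ this gives the split $\alpha x-\alpha_T=(\alpha^1 x-\alpha_T^1)+\beta(\alpha^2 x-\alpha_T^2)$. The idea is to treat $\beta$ as a \emph{radix} that keeps the two functionals from interfering: whatever value the first term takes, it is too small to be cancelled by a nonzero integer multiple of $\beta$. To make this precise I would bound the range of $\alpha^1 x$ over $R_B$. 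Since $x_j\in Z_{0,n}$ with $\sum_j x_j=n$ by \eqref{eq:RB}, and each $\alpha^1_j$ is at most the largest prime $P_{p+n-1}$, one gets $0\le\alpha^1 x\le nP_{p+n-1}$ and hence $|\alpha^1 x-\alpha_T^1|<nP_{p+n-1}$.

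The choice $\beta>nP_{p+n-1}$ in \eqref{eq:lemma1Beta} then forces a clean two-case split. If $\alpha^2 x\ne\alpha_T^2$, the second term satisfies $\beta\,|\alpha^2 x-\alpha_T^2|\ge\beta>nP_{p+n-1}$, which strictly dominates the first term in absolute value, so $\alpha x\ne\alpha_T$. If instead $\alpha^2 x=\alpha_T^2$, then $\alpha x-\alpha_T=\alpha^1 x-\alpha_T^1$, and here I would invoke Theorem~1 directly: it already rules out any $x\in R_B$ with $x\ne[1,\cdots,1]^T$ satisfying both $\alpha^1 x=\alpha_T^1$ and $\alpha^2 x=\alpha_T^2$ simultaneously, so necessarily $\alpha^1 x\ne\alpha_T^1$ and again $\alpha x\ne\alpha_T$. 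Combining the two cases shows that no $x\in R_B$ other than the all-ones vector can solve $\alpha x=\alpha_T$.

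The hard part will be pinning down the separation constant so that the rank-one equation faithfully encodes \emph{both} rows of the rank-two Theorem~1 system. Everything hinges on the bound $|\alpha^1 x-\alpha_T^1|<nP_{p+n-1}$ being strict together with $\beta$ exceeding it; this is exactly the content of \eqref{eq:lemma1Beta}, and it is what converts the additive collapse $\alpha=\alpha^1+\beta\alpha^2$ into an equivalence with the simultaneous pair of conditions handled in Theorem~1. Notably, no new number-theoretic input about the primes is needed here beyond what \eqref{eq:conditionP} already feeds into Theorem~1; the lemma is, in effect, a change of representation from two variables to one, justified purely by the radix-separation inequality.
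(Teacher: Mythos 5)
Your proposal is correct and follows essentially the same route as the paper: decompose $\alpha=\alpha^{1}+\beta\alpha^{2}$, use the bound on $\alpha^{1}x$ over $R_B$ together with $\beta>nP_{p+n-1}$ as a radix-separation argument, and fall back on Theorem~1 when $\alpha^{2}x=\alpha_T^{2}$. Your version is in fact slightly more careful than the paper's, since you bound the difference $\left|\alpha^{1}x-\alpha_T^{1}\right|$ explicitly (which is what the separation argument actually requires) rather than only $\alpha^{1}x$ itself.
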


If we further assume that $z>2n^n$ is selected to be an integer power of two, i.e. $z=2^k$, $k\in \mathbb{Z}^+$, then the sum
\[{\sum\limits_{k = 1}^n {{A_{i,k}}{z^{{\alpha _k}}}} }\]
for each row in a binary matrix $A$ in \eqref{eq:Perm} corresponds to setting at most $n$ bits on a sparse binary number, and the division
\[{\frac{1}{{{z^{{\alpha _T}}}}}}\] corresponds to a right shift operation.Clearly, the computational complexity in \eqref{eq:Perm} will be dominated by the $n-1$ multiplication operations which may involve extremely large, but sparse, binary numbers. 

The matrix $\alpha$ in Theorem 2 is constructed with prime numbers whose asymptotic distribution is given by the prime number theorem. More specifically, the prime number theorem shows that the prime counting function $\pi \left( n \right)$ is asymptotic to $\frac{n}{{\log \left( n \right)}}$, i.e.   
\begin{equation}\label{eq:PrimeTheorem}
\pi \left( n \right) \sim \frac{n}{{\log \left( n \right)}}.
\end{equation}
\eqref{eq:PrimeTheorem} is equivalent to the statement that $n^{\text{th}}$ prime number is asymptotic to $n{\log \left( n \right)}$ (see \cite{hardyPrimes}, Theorem 8), i.e. 
\begin{equation}\label{eq:PrimeAsymptotic}
{P_n} \sim n\log \left( n \right).
\end{equation}
Set $p=n^3$, then $P_p \sim n^3 \log\left(n^3\right)$ and
\begin{equation}\label{eq:deltaMax2}
\begin{array}{l}
{\Delta _{\max }} = {P_{p + n - 1}} - {P_p} \sim \left( {{n^3} + n - 1} \right)\log \left( {{n^3} + n - 1} \right) - {n^3}\log \left( {{n^3}} \right)\\
\sim n\log \left( {{n^3}} \right)
\end{array}
\end{equation}
and
\begin{equation}\label{eq:deltaMin2}
\begin{array}{l}
{\Delta _{\min }} = {P_{p + 1}} - {P_p} \sim \left( {{n^3} + 1} \right)\log \left( {{n^3} + 1} \right) - {n^3}\log \left( {{n^3}} \right) \sim \log \left( {{n^3}} \right)
\end{array},
\end{equation}
thus \eqref{eq:conditionP} is asymptotically satisfied.
From \eqref{eq:lemma1} and \eqref{eq:lemma1Beta},
\begin{equation}\label{eq:alphaTLemma1}
{\alpha _.} < {P_{p + n - 1}} + \left(n+1\right){\left( {{P_{p + n - 1}}} \right)^3},
\end{equation}
and 
\begin{equation}\label{eq:alphaTLemma1Cont}
\begin{array}{l}
{P_{p + n - 1}} + n{\left( {{P_{p + n - 1}}} \right)^3} \sim \left( {{n^3} + n - 1} \right)\log \left( {{n^3} + n - 1} \right)\\
+ \left(n+1\right){\left( {\left( {{n^3} + n - 1} \right)\log \left( {{n^3} + n - 1} \right)} \right)^3} \sim {n^{10}}{\left( {\log \left( {{n^3}} \right)} \right)^3}
\end{array}.
\end{equation}
For $n\gg2$, set $z=2^k<n^{n+1}$ and from \eqref{eq:Perm} and \eqref{eq:alphaTLemma1Cont}
\begin{equation}\label{eq:PermLimit}
\prod\limits_{i = 1}^n {\sum\limits_{k = 1}^n {{A_{i,k}}{z^{{\alpha _k}}}} }  \prec {n^{{n^{13}}{{\left( {\log \left( {{n^3}} \right)} \right)}^3}}}.
\end{equation}
The complexity in the computation of \eqref{eq:Perm} is dominated by the $n-1$ multiplications and from \eqref{eq:PermLimit} the result of multiplications is $o\left( {{n^{{n^{13}}{{\left( {\log \left( {{n^3}} \right)} \right)}^3}}}} \right)$, thus requiring  $o\left( {{n^{13}}{{\left( {\log \left( {{n^3}} \right)} \right)}^3}{{\log }_2}\left( n \right)} \right)$ bits. A naive implementation of the multiplication of two ${n^{13}}{\left( {\log \left( {{n^3}} \right)} \right)^3}{\log _2}\left( n \right)$ bit numbers requires ${\left( {{n^{13}}{{\left( {\log \left( {{n^3}} \right)} \right)}^3}{{\log }_2}\left( n \right)} \right)^2}$ binary operations, therefore the computational complexity in the multiplication operations in \eqref{eq:Perm} is $o\left( {{n^{27}}{{\left( {\log \left( {{n^3}} \right)} \right)}^6}{{\left( {{{\log }_2}\left( n \right)} \right)}^2}} \right)$ binary operations.

\section{Conclusions}
The matrix permanent lies at the heart of many analytical combinatorics methods which can describe both enumeration and optimization problems. This paper provides two solutions to the matrix permanent problem. The first solution, valid for arbitrary complex matrices, gives a formulation of the matrix permanent as a nested integral involving complex exponentials. The second solution is based on the first solution, and is intended to be implemented on a digital machine using basic arithmetical operations on complex integers. The paper shows that the second method can compute the permanent of a 0-1 matrix in polynomial time. Both methods require machines that can operate on large numbers. Therefore, an important conclusion from our results is that problems related to the matrix permanent can be solved in polynomial time using a classical binary digital computing machine with large registers.

\section*{Acknowledgments}
The author would like to thank to Professor Roy Streit and Professor Murat Efe for many helpful discussions on this work and to Professor Thiagalingam Kirubarajan for supporting his visit to the Department of Electrical and Computer Engineering of McMaster University where the research leading to this manuscript began as a search to find an efficient approximation to the target process marginalization problem in Bayesian multi-target tracking filters. 

The author was an Assistant Professor at the Department of Electrical and Electronics Engineering of Ankara University when this research was carried.

\end{document}